\documentclass{amsart}

\usepackage{amscd}
\usepackage{amssymb}
\usepackage{stmaryrd}
\usepackage{amsthm,amsmath}

\usepackage{color}

 \usepackage{hyperref}
\hypersetup{
  colorlinks=true, 
   linkcolor=red,  
}
\usepackage[hyphenbreaks]{breakurl}

\newcommand{\bkr}{\oblong} 
\newcommand{\bigbkr}{\bigbox} 

\theoremstyle{plain}
\newtheorem{thm}{Theorem}

\newtheorem{lemma}[thm]{Lemma}
\newtheorem{theorem}[thm]{Theorem}

\newtheorem{cor}[thm]{Corollary}
\theoremstyle{definition}
\newtheorem{example}[thm]{Example}
\newtheorem{problem}[thm]{Problem}
 
  \setcounter{tocdepth}{1}

\def\e{\mathbb{E \,}}
\def\BR{\mathbb{R}}
\def\BN{\mathbb{N}}

\def\p{\mathbb{P}}   
\def\pr{{\rm proj}}   
\def\ext{{\rm ext}_d} 
\def\Pr{{\rm Proj}}   
\def\Proj{\Pr}   
\def\bX{\mathbf{X}}
\def\bx{\mathbf{x}}
\def\bu{\mathbf{u}}

\def\C{{\rm Cyl}}
\def\Ext{{\rm Ext}_d} 
\def\F{{\mathcal F}}
\def\Fkn{{\mathcal{F}_n^{(k)}}}   


\begin{document}

\title{The van den Berg--Kesten--Reimer operator and inequality for infinite spaces}
 
\author{Richard Arratia \and Skip Garibaldi \and Alfred W. Hales}
\address{Arratia (corresponding author): Department of Mathematics, University of Southern California, Los Angeles, CA 90089-2532}
\email{rarratia \text{at} usc.edu}

\address{Garibaldi: Institute for Pure and Applied Mathematics, UCLA, 460 Portola Plaza, Box 957121, Los Angeles, California 90095-7121, USA}
\email{skip \text{at} member.ams.org}

\thanks{The second author's research was partially supported by NSF grant DMS-1201542.}

\address{Hales:  Center for Communications Research, San Diego, California 92121}
 
\subjclass[2010]{60E15}

\date{\tt August 29, 2015}

\begin{abstract}
We remove the hypothesis ``$S$ is finite'' from the BKR inequality for product measures on $S^d$, which
raises some issues related to descriptive set theory.
We also discuss the extension of the BKR operator and inequality, from 2 events to 2 or more events, and we remove, in one sense, the hypothesis that $d$ be finite.

\end{abstract}

\maketitle

\tableofcontents


\section{The classic BKR inequality}
The BKR inequality,  named for 
van den Berg--Kesten--Reimer, was conjectured in \cite{BK} and proved in 
 \cite{BFiebig} and \cite{Reimer}; see \cite{CPS} or \cite{BCR} for a clear exposition.   The setup  involves a probability space of the form $S^d$, with $S$ finite, and $\p$ a product measure, and the inequality takes the form:    for two events $A,B \subset S^d$, with $A \bkr B$ for the event that, informally,  ``$A$ and $B$ occur for disjoint reasons",
$$\p(A \bkr B) \le \p(A) \p(B).
$$

The somewhat convoluted history is summarized as follows:  Kesten and van den Berg \cite{BK} defined the operation
$A \bkr B$ on subsets of $S^d$, and proved the (BK)  inequality for the special case where $A$ and $B$ are assumed to be increasing events.  Then van den Berg and Fiebig \cite{BFiebig} proved a conditional implication, not involving increasing events: ``If the inequality holds for the cases
$S^d=\{0,1\}^d$ and $\p$ is the uniform distribution, with all $2^d$ points of $S^d$ equally likely,  then the inequality holds for  any finite $S$ and any product measure on $S^d$.''  Finally, Reimer \cite{Reimer}
proved the inequality in $\{0,1\}^d$, a purely combinatorial fact, so that combined with the earlier conditional implication from
\cite{BFiebig}, the general inequality was established.

In \cite{luck}, still in the context of $S$ finite and $\p$ a product measure on $S^d$, 
we 
had a Florida-lottery-crimefighting reason to need an extension of the BKR inequality, from $r=2$ events, to the more general case $r=2,3,\ldots$.   An easy example shows that
sometimes $(A \bkr B) \bkr C \ne A \bkr (B \bkr C)$, so we gave
a natural definition for 
 the $r$-fold operator
$\bigbkr_1^r A_i$, proved that $\bigbkr_1^r A_i \subset ( \cdots ((A_1 \bkr A_2) \bkr A_3) \cdots \bkr A_r)$,
and gave the easy induction, from the classic BKR inequality, to conclude that
$$
    \p\left (\bigbkr_1^r A_i \right) \le \prod_1^r \p(A_i).
$$
Although the case $S$ finite was sufficient for our application, it seemed strange to have to quote the hypothesis ``$S$ is finite'', before invoking the inequality.  Indeed, the  first draft of 
\cite{luck} made the mistake of omitting this hypothesis --- but thankfully was called to the carpet by a referee.

In this paper, we remove the restriction that $S$ be finite,  allowing $S=\mathbb{N}$ or $S=\mathbb{R}$, along with an arbitrary product probability measure on $S^d$, for our main result,
Theorem \ref{thm R}.   This raises issues related to descriptive set theory;  the BKR combination of Borel sets need not be a Borel set, and the BKR combination of Lebesgue measurable sets need not be Lebesgue measurable, see Example \ref{example hales}.  We will also, in Section \ref{sect infinite product}, remove the restriction that $d$ be finite, for one of the two natural ways of generalizing the BKR operator to spaces of the form $S^\BN$.  

Other extensions and complements to the BKR inequality are given in
\cite{alexander,GoldsteinRinott,KahnSaksSmyth}.
In greater detail, \cite{GoldsteinRinott} gives a generalization of the BKR operator and inequality
which applies to spaces such as $\BR^d$;  however, the combination of sets
which \cite[formula (5)]{GoldsteinRinott}  identifies as ``$A$ and $B$ occur for disjoint reasons"
is somewhat different from the original BKR combination $A \bkr B$, and depends on
the choice of measure and notions of essential infimum.   It is easy to
see the the BKR combination of events from \cite{GoldsteinRinott} is a superset of
the standard $A \bkr B$, hence the result from \cite{GoldsteinRinott},  
with the corrections and improvements provided in  \cite{GRarxiv},
proves the outer measure assertion in our Theorems \ref{thm [0,1]} and \ref{thm R}, via a method which finesses all issues of
projective sets by an appeal to Tonelli's theorem.     
In contrast to their approach, ours extends the BKR operator and inequality to infinite spaces in a way that closely follows the original definitions, meaning as a combination of events, rather than a combination of events and measures.
 
\subsection*{Acknowlegement}

We thank Yiannis Moschovakis for a helpful conversation.
 
 \section{Definition of the BKR operators}

The formal definition of $A \bkr B$, copied from \cite{BK}, begins with the notation
 $\omega= (\omega_1,\ldots,\omega_d)$ or $\overline\omega=(\overline\omega_1,\ldots,\overline\omega_d)$ for elements of $S^d$.  For  $\omega\in S^d$ and $K\subset [d]:= \{1,\ldots,d\}$,  consider the \emph{thin cylinder} $\C(K,\omega) :=\{\overline\omega \! : \ \overline\omega_i=\omega_i,\ i\in K\}$. For $A,B\subset S^d$  define $A \bkr B$ as the set of $\omega$ for which there exists a $K\subset [d]$ such that $\C(K,\omega)\subset A$ and $\C(K^c,\omega)\subset B$, where
$ K^c := [d] \setminus K$ is the complement of $K$ relative to the universe of coordinate indices.

A small paraphrase of this definition is based on $[A]_K$ defined to be the largest cylinder 
set\footnote{Both  $\C(K,\omega)$ and $[A]_K$ are defined relative to  $S^d$.   We have several occasions  in this paper to work simultaneously with two different sets in the role of $S$, and it should be understood that the definition of the BKR operator for sets $A,B \subset S^d$ also involves the choice of $S$ and $d$.   Apart from Section \ref{sect relax}, we use the same symbol
$\bkr$ for every operator of this form, and leave it to the reader to understand the appropriate context.} 
 contained in $A$ and free in the directions indexed by $K^c$:
\begin{equation}\label{def maximal cylinder}
  \text{for } A \subset S^d, \ \   [A]_K := \{ \omega \! : \ \C(K,\omega)\subset A \}.
\end{equation}
With this notation,

\begin{equation}\label{def bkr kesten}
  \text{for } A,B \subset S^d, \ \     A \bkr B := \bigcup_{ K \subset [d]} [A]_K \cap [B]_{K^c}.
\end{equation}
An obvious relation, that $ J \subset K \subset [d]$ implies $[A]_J \subset [A]_K$, shows that
\eqref{def bkr kesten} is equivalent to the following:
\begin{equation}\label{def bkr 2}
  \text{for } A,B \subset S^d, \ \        A \bkr B := \bigcup_{\text{ disjoint }  J,K \subset [d]} [A]_J \cap [B]_K.
\end{equation}

The  definition of the simultaneous $r$-fold BKR operator given in \cite{luck} is, for $A_1,\ldots,A_r \subset S^d$, 
\begin{equation}\label{def bkr many}
 \bigbkr_{1 \le i \le r} A_i  \equiv    A_1 \bkr A_2 \bkr \cdots \bkr A_r  
 := \bigcup_{J_1,\ldots, J_r} 
\  [A_1]_{J_1} \cap [A_2]_{J_2} \cap \cdots \cap [A_r]_{J_r},
\end{equation}
where the union is taken over 
disjoint  subsets $J_1,\ldots, J_r$ of $\{1,\ldots,d\}$.
It  is clear that for the case $r=2$, definition
\eqref{def bkr many} agrees with  \eqref{def bkr 2}, and hence with \eqref{def bkr kesten}.

\subsection{Careful notation for cylinders, projections, extensions}
\label{sect proj notation}

We follow the strict convention that, for any sets $U,V$, the set $U^V$ is the set of all functions from $V$ to $U$,
and an element $f \in U^V$ carries the information:  what is the domain of $f$, and what is the range of $f$.   For the case $V=\emptyset$, there is one point exactly in $U^V$.
Since we use the notational convention, common in combinatorics, that for $d=0,1,2,\ldots$,  $[d]:=\{1,2,\ldots,d\}$, the $d$-fold Cartesian product of a set $S$ with itself, $S^d$, is exactly equal to $S^{[d]}$.   But for  $0 \le k \le d$, there are ${d \choose k}$ subsets $K \subset [d]$, with $|K|=k$, and there are ${d \choose k}$ different sets $S^K$; only one of these is equal to $S^k$, namely, the one with $K =[k]$. 

It will be convenient to work first with the case $S=[0,1]$, allowing us to specialize to the uniform distribution.

For $ K \subset [d]$,  the projection
$$
    \pr_K: [0,1]^{[d]} \to [0,1]^K
$$
is, naturally, the function $f \mapsto f|_K$ which restricts a function $f \in     [0,1]^{[d]}$ to have domain $K$.   There is a single one-to-many \emph{relation} $\ext$, with domain $\cup_{K \subset [d]} [0,1]^K$, which serves as the inverse for \emph{all} of maps $\pr_K$,  namely, $(g,f) \in \ext$ if and only if, for some  $K \subset [d]$,
$g \in [0,1]^K$, $f \in [0,1]^{[d]}$, and $g=f|_K$. 
 
For any set $D$, we write $2^D$ for the power set of $D$, i.e., the set of all subsets of $D$. 
We will be fussy, to distinguish a function from $D$ to $D'$, and its inverse relation,  written with lowercase,
from the induced functions, mapping 
$2^{D}$ to $2^{D'}$ and back, written with uppercase.

Thus, we have $2^d $ projection functions
$$ 
   \Pr_K :   2^{[0,1]^{[d]}}   \to 2^{[0,1]^K},
$$
and a single extension  function,
$$
    \Ext:   2^{\cup_K [0,1]^K} \to 2^{[0,1]^{[d]}}.
$$
In particular, for  $K \subset [d]$, 
$$
\text{ for } C \subset [0,1]^K,   \Ext(C) := \{ f \in [0,1]^{[d]} \! : \    f|_K \in C \}   = \Proj_K^{-1}(C)   .
$$
With $k=|K|$, if $C$ is Borel 
then so is $\Ext(C)$, and $m_k(C)=m_d(\Ext(C))$,
and if $C$ is Lebesgue measurable then so is $\Ext(C)$, and 
$\lambda_k(C)=\lambda_d(\Ext(C))$  --- see Section \ref{sect m notation} for our notation for Lebesgue measures.

\section{Measurability considerations}

\subsection{Introductory motivation}
In 1905, Lebesgue  \cite[pages 191--192]{1905} stated, incorrectly, that projections of Borel sets are Borel sets, and Suslin \cite{1917} showed otherwise.   Superficially, this is an obstacle to extending
the BKR inequality from $S^d$ with $S$ countable to the case with $S=[0,1]$, since in $[0,1]^d$, even starting with Borel sets $A,B$, we cannot assert that $A \bkr B$ is also a Borel set.  In more detail,
 $$
 A \bkr B :=\cup_{ K \subset [d]} [A]_K \cap [B]_{K^c}
 $$
where $[A]_K$ is the maximal cylinder subset of $A$ free in the directions in $[d] \setminus K$,
equivalently, using notation from Section \ref{sect proj notation},
\begin{equation}\label{starting to be careful}
    [A]_K := \Ext \left( [0,1]^{K} \setminus \Pr_K(A^c) \right).
\end{equation}    
However, 
Suslin 
also
showed that projections of Borel sets 
are nice,
 in the concrete sense of having equal inner and outer measure, i.e., being measurable in the \emph{completion} of the Borel sigma-algebra with respect to Lebesgue measure \cite[8.4.1]{cohn}.  
For history, see  \cite[p 500]{Dudley},\cite[p 232]{Potter},\cite{YNM,AK}.

\subsection{Notation:   $m_d$ versus $\lambda_d$}\label{sect m notation}
It is a very common and confusing practice to use the name \emph{Lebesgue measure},  here in the context of $[0,1]^d$, to refer to \emph{two different} objects.  The first object called Lebesgue measure is the measure on the Borel sets of $[0,1]^d$, determined by the requirement that it extends the notion of volume for solid rectangles
$[a_1,b_1] \times \cdots \times [a_d,b_d]$. We shall use the notation $m_d$ for this first measure, so that for a Borel set $A \subset [0,1]^d$, we may write $m_d(A)$.   The second object called Lebesgue measure is the completion of the first object; we shall use the notation $\lambda_d$ for this measure.   Hence, the sentence
$$
     x=\lambda_d(B)
$$
is shorthand for the statement that (there exist Borel sets $A,C \subset [0,1]^d$
with $A \subset B \subset C$ and $m_d(A)=m_d(C)=x$).  There is no ambiguity in the phrase \emph{Lebesgue measurable}, since this describes elements of the completed sigma-algebra, which is the domain of $\lambda_d$.

The  ${d \choose k}$ different spaces $[0,1]^K$, for $K \subset [d]$ with $|K|=k$, are all naturally measure isomorphic to $[0,1]^k$. Rather than writing the explicit isomorphism, or naming  the corresponding copies of Lebesgue measures as $m_K$ and $\lambda_K$, we simply write $m_k$ and $\lambda_k$.  This is a minor abuse of notation, and not a capital crime.

\subsection{Details for measurability}

\begin{lemma}\label{lem cylinder}
Assume that $A$ is a Borel subset of $[0,1]^d$, and that $  K \subset [d]$.   Then, the cylinder $[A]_K$ is Lebesgue measurable.   
 If $A,B$ are Borel subsets of 
$[0,1]^d$, then $A \bkr B$ is Lebesgue measurable, and 
 if  $A_1,\ldots,A_r$ are Borel subsets of 
$[0,1]^d$, then $\bigbkr_1^r A_i$ is Lebesgue measurable.
\end{lemma}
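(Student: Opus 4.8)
The plan is to reduce everything to the single building block $[A]_K$ and then exploit the fact that the Lebesgue measurable sets form a sigma-algebra, closed under the finitely many complements, intersections, and unions that appear in the definitions \eqref{def bkr kesten} and \eqref{def bkr many}. The only genuinely non-routine input is Suslin's theorem, already recalled in the excerpt.

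First I would establish that $[A]_K$ is Lebesgue measurable. Since $A$ is Borel, so is its complement $A^c$, and formula \eqref{starting to be careful} gives $[A]_K = \Ext\left([0,1]^K \setminus \Pr_K(A^c)\right)$. The set $\Pr_K(A^c)$ is the image of the Borel set $A^c$ under the coordinate projection, hence an analytic (Suslin) set; by Suslin's theorem such a set has equal inner and outer measure and so lies in the domain of $\lambda_k$, where $k=|K|$. Its complement $[0,1]^K \setminus \Pr_K(A^c)$ is then also Lebesgue measurable, and since the extension operator carries Lebesgue measurable sets to Lebesgue measurable sets (as recorded at the end of Section \ref{sect proj notation}, where one also has $\lambda_k(C)=\lambda_d(\Ext(C))$), we conclude that $[A]_K$ is Lebesgue measurable.

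Second, I would assemble the two- and $r$-fold operators from this. By \eqref{def bkr kesten}, $A \bkr B = \bigcup_{K \subset [d]} [A]_K \cap [B]_{K^c}$ is a union over the $2^d$ subsets $K$ of $[d]$ of the intersection of two sets, each Lebesgue measurable by the first step; being a finite union of finite intersections of elements of a sigma-algebra, $A \bkr B$ is Lebesgue measurable. The $r$-fold case is identical: by \eqref{def bkr many}, $\bigbkr_1^r A_i$ is the union, over the finitely many tuples of pairwise disjoint subsets $J_1,\ldots,J_r$ of $[d]$, of the intersections $[A_1]_{J_1} \cap \cdots \cap [A_r]_{J_r}$, each factor being Lebesgue measurable by the first step, so the result is again Lebesgue measurable.

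The main obstacle is concentrated in the first step, specifically the invocation of Suslin's theorem. Without it one could only assert that $\Pr_K(A^c)$ is analytic, which need not be Borel, so $[A]_K$ itself need not be Borel and the naive ``Borel in, Borel out'' reasoning is unavailable; the whole point is that analyticity still suffices for membership in the Lebesgue completion. Everything after the first step is routine sigma-algebra bookkeeping, and it is here that the finiteness of $d$ is used, bounding the number of subsets $K$ and of disjoint tuples $(J_1,\ldots,J_r)$ so that the relevant unions and intersections are finite.
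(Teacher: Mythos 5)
Your proposal is correct and follows essentially the same route as the paper's own proof: reduce to $[A]_K$ via \eqref{starting to be careful}, invoke Suslin's theorem that the analytic set $\Pr_K(A^c)$ is Lebesgue measurable, transfer measurability through $\Ext$ (the paper does this by explicitly sandwiching $\Pr_K(A^c)$ between Borel sets of equal measure and applying $\Ext$ to the complements, which is exactly the content of the fact you cite from Section \ref{sect proj notation}), and finish with finite unions and intersections. The only difference is cosmetic: the paper's explicit Borel sandwich also records the value $\lambda_d([A]_K)$ and produces an inner Borel approximation, which it reuses later at \eqref{borel inside}, but this is not needed for the lemma itself.
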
     
\begin{proof}
To start, $A$ is a Borel subset of
$[0,1]^d$ so $A^c := [0,1]^d \setminus A$ is also Borel, and the projection
$ C := \Pr_K (   A^c)$ is an analytic subset of $[0,1]^K$.   Since analytic sets
are Lebesgue measurable, there exist Borel subsets $B,D \subset  [0,1]^K$
with
$$
     B \subset C \subset D, \ \  m_k(B)=m_k(D) =: 1-x.
$$     
Taking complements relative to $[0,1]^K$, we have 
$$      
 D^c  \subset C^c \subset B^c, \ \  m_k(D^c)=m_k(B^c) =  x. 
$$       
Let $E = \Ext(D^c), F = \Ext(B^c)$, so that $E$ and $F$ are Borel subsets of
$[0,1]^d$, with 
$$
    E \subset [A]_K \subset F, \ \  m_d(E) = m_d(F)=x.
$$
This shows that  $[A]_K$ is Lebesgue measurable, with $\lambda_d( [A]_K)=x$.

The Lebesgue measurability claims for $A \bkr B$  and $\bigbkr_1^r A_i$
now follow immediately from the definitions \eqref{def bkr kesten} and \eqref{def bkr many}.
\end{proof}

The following example shows why, in Corollary  \ref{cor [0,1]}, with the hypothesis that $A$ and $B$ are Lebesgue measurable, we could not simply state that $\lambda_d(A \bkr B) \le \lambda_d(A) \, \lambda_d(B)$.

\begin{example}\label{example hales}
  The BKR combination of Lebesgue measurable sets need not be Lebesgue measurable, 
  as shown by this example with $d=2$.  Take a set $C \subset [0,1]$ which is \emph{not}  a Lebesgue measurable
subset of [0,1].  Then $C^2 \subset [0,1]^2$ is  \emph{not}  a Lebesgue measurable
subset of  $[0,1]^2$. The diagonal in $[0,1]^2$ is 
$$
    D  := \{ (x,x) \! : \ x \in [0,1]  \} \  \subset [0,1]^2,
$$
and this is a Borel subset of $[0,1]^2$, with $m_2(D)=0$.  Hence the set
$$  
    E :=  \{ (x,x) \!: \  x \in ([0,1]\setminus C)  \}  \  \subset D \subset [0,1]^2
$$
\emph{is} Lebesgue measurable, with $\lambda_2(E)=0$.
Now, taking complement relative to $[0,1]^2$, let
$$
    A  := [0,1]^2 \setminus E,
$$    
so that $A$ is Lebesgue measurable, with $\lambda_2(A)=1$.
We have
$$
    [A]_{\{1\}} = C \times [0,1]   , \ \ \  [A]_{\{2\}} = [0,1] \times C,       
$$ 
and with $B :=A$ we have
$$ 
    A \bkr B = C^2.
$$
\end{example}

\section{Approximation,  from [0,1] to a finite set}

\begin{theorem}\label{thm [0,1]}
For Borel subsets $A,B$ in $[0,1]^d$,
$$
 \lambda_d(A \bkr B) \le m_d(A) \, m_d(B).
$$
\end{theorem}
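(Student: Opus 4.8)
The plan is to prove the inequality by a two-stage approximation that reduces the continuous statement to the already-known classic BKR inequality on $S^d$ with $S$ finite and $\p$ a product measure. Three facts drive the argument. First, the operator is monotone: if $A \subseteq A'$ and $B \subseteq B'$ then $[A]_K \subseteq [A']_K$ for every $K$, so $A \bkr B \subseteq A' \bkr B'$. Second, the operator is compatible with the dyadic grid. Partition $[0,1]$ into the $2^n$ half-open intervals of length $2^{-n}$; this induces a partition of $[0,1]^d$ into $2^{nd}$ boxes and a coordinatewise surjection $\pi_n : [0,1]^d \to S_n^d$ onto the finite space $S_n^d$, where $|S_n| = 2^n$, carrying $m_d$ to the uniform product measure $\mu_n$. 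If $A = \pi_n^{-1}(\tilde A)$ and $B = \pi_n^{-1}(\tilde B)$ are unions of grid boxes, then a direct check of the definition of $[A]_K$ gives $[A]_K = \pi_n^{-1}([\tilde A]_K)$, hence $A \bkr B = \pi_n^{-1}(\tilde A \bkr \tilde B)$ and $m_d(A \bkr B) = \mu_n(\tilde A \bkr \tilde B)$. Third, the classic finite BKR inequality applies to $S_n^d$ with $\mu_n$, which gives $m_d(A \bkr B) \le m_d(A)\, m_d(B)$ for all grid sets $A, B$.

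The first stage reduces arbitrary Borel sets to open sets. Given Borel $A, B$ and $\epsilon > 0$, outer regularity of $m_d$ supplies open $U \supseteq A$ and $V \supseteq B$ with $m_d(U) < m_d(A) + \epsilon$ and $m_d(V) < m_d(B) + \epsilon$. By monotonicity $A \bkr B \subseteq U \bkr V$. Moreover $U \bkr V$ is \emph{open}: for open $U$ the set $U^c$ is compact, so its continuous projection $\Pr_K(U^c)$ is compact, whence $[0,1]^K \setminus \Pr_K(U^c)$ is open and, by \eqref{starting to be careful}, $[U]_K = \Ext([0,1]^K \setminus \Pr_K(U^c))$ is open; taking the finite union over $K$ keeps $U \bkr V$ open. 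Consequently $U \bkr V$ is Borel and $\lambda_d(A \bkr B) \le m_d(U \bkr V)$. Once I show $m_d(U \bkr V) \le m_d(U)\, m_d(V)$ for open sets, letting $\epsilon \to 0$ in $\lambda_d(A \bkr B) \le (m_d(A)+\epsilon)(m_d(B)+\epsilon)$ finishes the proof.

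The second stage proves the open-set case by inner grid approximation. Let $U_n$ be the union of the level-$n$ dyadic boxes contained in $U$, and likewise $V_n$; these are grid sets with $U_n \uparrow U$ and $V_n \uparrow V$. The crucial identity is $U \bkr V = \bigcup_n (U_n \bkr V_n)$. The inclusion $\supseteq$ is monotonicity. For $\subseteq$, suppose $\omega \in U \bkr V$ with witnessing $K$; then the thin cylinder $\C(K,\omega)$ is \emph{compact} (a product of the fixed coordinates with copies of $[0,1]$) and contained in the open set $U$, so $\mathrm{dist}(\C(K,\omega), U^c) > 0$, and every level-$n$ dyadic box meeting $\C(K,\omega)$ lies in $U$ once $2^{-n}\sqrt{d}$ is smaller than this distance, giving $\C(K,\omega) \subseteq U_n$, i.e.\ $\omega \in [U_n]_K$, for all large $n$; the same argument gives $\omega \in [V_n]_{K^c}$ for large $n$, so $\omega \in U_n \bkr V_n$. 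Since $U_n \bkr V_n$ is increasing, continuity of $m_d$ from below together with the grid case gives
$$
m_d(U \bkr V) = \lim_n m_d(U_n \bkr V_n) \le \lim_n m_d(U_n)\, m_d(V_n) \le m_d(U)\, m_d(V),
$$
as required.

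The main obstacle, and the reason for routing through open sets, is that a naive \emph{outer} grid approximation of a Borel set need not converge in measure: if $A$ has a boundary of positive measure, the union of grid boxes meeting $A$ stays too large in the limit. Passing to an open superset $U$ is exactly what makes the \emph{inner} dyadic approximation $U_n \uparrow U$ converge in measure, and the compactness of the thin cylinders $\C(K,\omega)$ is what lets the BKR combination converge as well, through the identity $U \bkr V = \bigcup_n (U_n \bkr V_n)$. Verifying that identity --- in particular the distance-to-complement argument establishing $\C(K,\omega) \subseteq U_n$ for large $n$ --- is the technical heart of the proof.
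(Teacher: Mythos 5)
Your proof is correct, and it takes a genuinely different route from the paper's. Both arguments ultimately invoke the classic finite BKR inequality on the level-$n$ dyadic quotient $S^d$ with $|S|=2^n$ and the uniform product measure, and your grid-compatibility identity $[\pi_n^{-1}(\tilde A)]_K = \pi_n^{-1}([\tilde A]_K)$ is the same mechanism as the paper's final passage to equivalence classes modulo the atoms of $\Fdn$. The routes there diverge: the paper argues by contradiction and approximates in measure, using inner Borel approximations of the analytic cylinder bases $[[A]]_K$ together with $L^1$ martingale convergence of conditional expectations on $\Fdn$ to build dyadic sets $A',B'$ satisfying $\lambda_d(A' \bkr B') > \lambda_d(A \bkr B) - \varepsilon$ and $m_d(A') < m_d(A) + \varepsilon$, with $2^{d+1}$ bookkept $\delta$-errors; you instead argue directly and topologically, passing to open supersets by outer regularity, observing that for open $U$ each $[U]_K$ is open because $\Pr_K(U^c)$ is compact, and then proving the \emph{exact} identity $U \bkr V = \cup_n \, (U_n \bkr V_n)$ via compactness of the thin cylinders $\C(K,\omega)$ and their positive distance to $U^c$. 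Your approach buys exactness (an increasing union and continuity from below, with no error bookkeeping), avoids martingales and the inner approximation of analytic sets entirely, and as a byproduct produces an open superset of $A \bkr B$ of controlled measure, which is essentially the content of Corollary \ref{cor [0,1]}; what it costs is reliance on compactness of $[0,1]^d$, so it is more tied to the topology, whereas the paper's scheme is purely measure-theoretic. Two small points to tidy: (i) to speak of $\lambda_d(A \bkr B)$ at all you still need measurability of $A \bkr B$, which is Lemma \ref{lem cylinder} --- your argument by itself bounds only the outer measure via the open set $U \bkr V$, which suffices once that lemma is cited; (ii) as in the paper's definition of $\F_n$, take the last dyadic interval $[1-2^{-n},1]$ closed so that the level-$n$ boxes genuinely partition $[0,1]^d$; with that convention your diameter bound $2^{-n}\sqrt{d}$ and the rest of the distance-to-complement argument go through unchanged.
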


\subsection{Overview of the argument}\label{sect approximate}.
We want to prove that, for Borel 
$A,B \subset [0,1]^d$, we have $\lambda_d( A \bkr B) \le m_d(A) \ m_d(B)$,
and we proceed by contradiction.   Thus, we assume that we have $A,B$ with
\begin{equation}\label{def epsilon}
    4 \, \varepsilon := \lambda_d( A \bkr B) - m_d(A) \ m_d(B)   > 0,
\end{equation}
and we work to provide an example, with finite $S$, in which the classic BKR inequality on $S^d$ is violated.

In this example,  for some large but finite $n$, we have $|S| = 2^{n}$, 
$|S^d|=2^{nd}$,  corresponding to the number of atoms in the
``observe the first $n$ bits'' sigma-algebra $\F_n^{(d)}$ on $[0,1]^d$.   The product measure $\p$ on $S^d$ will be the uniform distribution, with mass $2^{-nd}$ at each point of $S^d$.  We will produce subsets
$A'', B'' \subset S^d$ for which
\begin{equation}\label{goal 1}
   \p(A'' \bkr B'') \ge    \lambda_d( A \bkr B) - \varepsilon
\end{equation}
and
\begin{equation}\label{goal 2}   
    \p(A'') \le m_d(A) + \varepsilon, \ \p(B'') \le m_d(B)+ \varepsilon,
\end{equation}
so that $A'',B''$ violate the classic BKR inequality.

\subsection{Set approximation, in 1 dimension}
To lighten the notational burden, we start with  dimension 1, and  review a familiar martingale, from for example \cite[Examples 35.3, 35.10]{Billingsley3}. 
The probability space is [0,1], with the Borel sigma-algebra, and the probability measure is $m_1$.
For $n=0,1,2,\ldots$, define  $\F_n$ to be the sigma-algebra generated by the $2^n$ disjoint intervals, $[0,1/2^n)$, $[1/2^n,2/2^n),\ldots$,
$[(n-2)/2^n,(n-1)/2^n)$, $[1-1/2^n, 1]$.  Note that the last of these intervals is exceptional, in that it is closed at both ends,
but all $2^n$ intervals $I$ have length $m_1(I)=1/2^n$.      The sigma-algebra $\F_n$ has $2^n$ atoms, and is a family of $2^{2^n}$ subsets of [0,1].  These sigma-algebras are nested,
and $\sigma( \cup_{n \ge 0}  \F_n)$  is the usual Borel sigma-algebra on [0,1]. 

Hence for any Borel measurable $h:[0,1] \to [0,1]$,   $M_n :=   \e ( h | \F_n)$ is a martingale. 
  Explicitly, on an atom $I$ of $\F_n$,
$M_n =2^n \e(h;I) = 2^n \int_I h(x)  \ dx$. The martingale convergence theorem implies that $M_n$ converges to $h$, almost surely and in $L_1$, with the $L_1$ convergence meaning that   $\e | M_n - h | \to 0$ as $n \to \infty$.

In particular, given a Borel measurable $C \subset [0,1]$,  we take $h$ to be the indicator function $h = 1_C$.
Explicitly,   on an atom $I$ of $\F_n$,   $M_n = 2^n \ m_1( C \cap I)$.
From this martingale, we round values in [0,1/2]  down to 0, and values in (1/2,1]  up to 1, to get a deterministic set $C_n \in \F_n$.   Explicitly, 
$$
C_n :=  \{ \omega \in [0,1]: M_n(\omega) > 1/2 \}.
$$
For a point $x$ to be in the symmetric difference set, $C \Delta C_n$,  the rounding error is at least one half.  This implies that   $m_1( C \Delta C_n) \le 2 \e | M_n - 1_C |$.

\subsection{Set approximation, in $k$ dimensions}

The above extends to dimension $k$, for $k=1,2,\ldots$, with no difficulties, only extra notation.  The probability space is $[0,1]^k$,  with 
$m_k$ serving as the probability measure.  We define, for $n=0,1,2,\ldots$, the analogous  sigma-algebra $\Fkn$  with $2^{nk}$ atoms, and for any Borel measurable set 
$C\subset [0,1]^k$, the martingale argument gives us determinstic sets $C_n$, with
\begin{equation}\label{approximate k}
    m_k ( C \Delta C_n )  \to 0,
\end{equation}
and $C_n$ is $\Fkn$ measurable.

\subsection{Approximation in $[0,1]^d$ to control BKR ingredients}

Recall that for $A \subset [0,1]^{d}$ and   $K \subset [d]$,  $[A]_K \subset  [0,1]^{d}$ is the  (maximal) cylinder subset of $A$, in the directions not restricted by $K$.

We write 
\begin{equation}\label{def cylinder base}
  [[A]]_K :=    \Pr_K ( [A]_K) \ =
[0,1]^K \setminus  \Pr_K( [A^c]_K) \ \  \subset [0,1]^K
\end{equation}
for the base of this cylinder.  From the proof of Lemma \ref{lem cylinder}, $[[A]]_K$ is Lebesgue measurable, and there is a Borel subset
$C \subset [0,1]^K$ with 
\begin{equation}\label{borel inside}
    C \subset [[A]]_K,  \ m_k(C)=\lambda_k( [[A]]_K).
\end{equation}
Observe that, with $1 \le k = |K| < d$,
$$
     [A]_K = \Ext( [[A]]_K) \supset \Ext(C)
$$ 
and
$$
       \lambda_d( [A]_K) = \lambda_k([[A]]_K) =  m_k(C) = m_d(\Ext(C)).
$$
          
Taking 
$A$ or $B$, and  $K \subset [d]$, we have $2^{d+1}$ instances of
a set $C \subset [[A]]_K$ or      $C \subset [[B]]_K$, with $0 \le k := |K| \le d$, to serve as the target for an approximation
as given by the martingale argument, summarized by \eqref{approximate k}.   Since 
\begin{equation}\label{A bkr B}
    A \bkr B = \bigcup_K \, [A]_K \cap [B]_{K^c},
\end{equation}    
has  $2^{d+1}$ ingredients, we take
$$
\delta := \varepsilon / 2^{d+1},
$$
and pick a single value of $n$ so that for each of the instances of $C$,
\begin{equation}\label{delta close}
    m_k(C \Delta C_n) < \delta.
\end{equation}    
    
When $C \subset [[A]]_K$, the dyadic approximation $C_n$ is a subset of $[0,1]^K$, and we write
$$
    A_{n,K} := \Ext( C_n) \subset [0,1]^d
$$ 
for the cylinder set whose base is $C_n$.
Thus, with similar notation for $B$ and approximations $B_{n,K}$ to $[B]_K$, we have, from \eqref{borel inside} and \eqref{delta close},
that
\begin{equation}\label{not too small}
   \lambda_d( [A]_K \setminus A_{n,K} ) < \delta,\ \  \lambda_d(  [B]_K\setminus B_{n,K} ) < \delta,
\end{equation}
and since $[A]_K \subset A, [B]_K \subset B$,
\begin{equation}\label{not too big}
   m_d( A_{n,K} \setminus A ) < \delta,\ \  m_d( B_{n,K} \setminus B ) < \delta.
\end{equation}

Note also that $A_{n,K},B_{n,K} \in \F_n^{(d)}$.   We take
\begin{equation}\label{def A'}
    A' := \cup_K A_{n,K}, \ \ B' := \cup_K B_{n,K},
\end{equation}
so that     
$$  
   A',B' \in \F_n^{(d)},
$$   
and for every $K$, $[A']_K \supset A_{n,K}$, similarly for $B$, so that by \eqref{not too small},
\begin{equation}\label{not too small again}
\lambda_d( [A]_K \setminus [A']_K ) < \delta,\ \  \lambda_d(  [B]_K\setminus [B']_K ) < \delta.
\end{equation}

Using \eqref{not too small again},
$$
\lambda_d( [A']_K \cap [B']_{K^c})  >   \lambda_d(    [A]_K \cap [B]_{K^c}) - 2 \delta,
$$
and hence for the unions, with $2^d$ values for $K$, using $2^{d+1} \delta = \varepsilon$,
$$ 
   \lambda_d( A' \bkr B') > \lambda_d(A \bkr B) - \varepsilon.
$$
To get an inequality in the opposite direction, 
combining \eqref{not too big} with \eqref{def A'}, 
$$    
      m_d( A' \setminus A) < 2^d \, \delta < \varepsilon, \text{ hence } m_d(A') < m_d(A) + \varepsilon,
$$
and similarly         $m_d(B') < m_d(B) + \varepsilon$.

Finally, since $A',B' \in    \F_n^{(d)}$,   we take equivalence classes modulo the atoms of $\F_n^{(d)}$,
to produce our sets $A'',B'' \in S^d$ for $S$ with $|S|=2^n$, to get the example satisfying
\eqref{goal 1} and \eqref{goal 2}.   This completes a proof of Theorem \ref{thm [0,1]}.

\begin{cor}\label{cor [0,1]}
For Lebesgue measurable $A,B \subset [0,1]^d$,  there exists a Borel set $C$, with
$$
 (A \bkr B) \subset C, \ \ \ m_d(C)    \le \lambda_d(A) \, \lambda_d(B).
$$
\end{cor}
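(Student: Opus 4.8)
The plan is to reduce the Lebesgue-measurable case to the already-established Borel case of Theorem \ref{thm [0,1]}, using the monotonicity of the BKR operator to sandwich $A$ and $B$ between Borel sets from above. The one point requiring care, already flagged by Example \ref{example hales}, is that $A \bkr B$ need not be measurable at all; this is precisely why the conclusion asks only for a Borel \emph{superset} $C$ carrying the measure bound, rather than for a bound on the (possibly undefined) quantity $\lambda_d(A \bkr B)$.

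First I would record the monotonicity of the operator. If $A \subset A'$, then $\C(K,\omega) \subset A$ forces $\C(K,\omega) \subset A'$, so $[A]_K \subset [A']_K$ for every $K \subset [d]$; hence by \eqref{def bkr kesten} the map $(A,B) \mapsto A \bkr B$ is monotone in each argument. Next, since $A$ and $B$ are Lebesgue measurable, outer regularity --- equivalently, the definition of $\lambda_d$ via the completed sigma-algebra in Section \ref{sect m notation} --- supplies Borel hulls $A_1 \supset A$ and $B_1 \supset B$ with $m_d(A_1) = \lambda_d(A)$ and $m_d(B_1) = \lambda_d(B)$. By monotonicity, $A \bkr B \subset A_1 \bkr B_1$.

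Now $A_1, B_1$ are Borel, so Theorem \ref{thm [0,1]} gives
$$
\lambda_d(A_1 \bkr B_1) \le m_d(A_1)\, m_d(B_1) = \lambda_d(A)\, \lambda_d(B),
$$
while Lemma \ref{lem cylinder} guarantees that $A_1 \bkr B_1$ is itself Lebesgue measurable. Taking a Borel hull one more time, I would choose a Borel set $C \supset A_1 \bkr B_1$ with $m_d(C) = \lambda_d(A_1 \bkr B_1)$. Then $A \bkr B \subset A_1 \bkr B_1 \subset C$ and $m_d(C) \le \lambda_d(A)\, \lambda_d(B)$, which is exactly the asserted conclusion.

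The step that does the real work is the single application of Theorem \ref{thm [0,1]} to the Borel pair $A_1, B_1$; everything else is bookkeeping with Borel hulls. The only genuine subtlety, and the reason the statement is phrased via a superset, is the two-fold reliance on measurability: Lemma \ref{lem cylinder} is needed to know that $A_1 \bkr B_1$ is measurable, so that a Borel hull $C$ of matching measure exists, and monotonicity is needed to trap the possibly non-measurable set $A \bkr B$ inside these Borel objects without ever having to assign it a measure of its own.
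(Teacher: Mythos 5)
Your proposal is correct and takes essentially the same route as the paper's proof: Borel hulls $A_1 \supset A$, $B_1 \supset B$ of measures $\lambda_d(A)$, $\lambda_d(B)$, monotonicity of $\bkr$, and a single application of Theorem \ref{thm [0,1]}. The only cosmetic difference is that the paper compresses your last two steps, since under the notational convention of Section \ref{sect m notation} the conclusion $\lambda_d(A_1 \bkr B_1) \le m_d(A_1)\, m_d(B_1)$ already asserts the existence of a Borel superset $C$ of the required measure, so the separate appeals to Lemma \ref{lem cylinder} and a second Borel hull are subsumed.
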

\begin{proof}
Take Borel sets $A_1,B_1 \subset [0,1]^d$ with $A \subset A_1, B \subset B_1$, and $m_d(A_1)=\lambda_d(A), m_d(B_1)=\lambda_d(B)$.   Obviously $A \bkr B \subset A_1 \bkr B_1$, and Theorem \ref{thm [0,1]} implies the existence of a Borel set $C$ with $A_1 \bkr B_1 \subset C$ and
$m_d(C) \le m_d(A_1) m_d(B_1)$.
\end{proof}

\section{Extension to 3 or more events}

\begin{theorem}\label{thm [0,1] many}
For Borel subsets $A_1,\ldots,A_r$ in $[0,1]^d$,
\begin{equation}\label{goal multi}
 \lambda_d \left( \bigbkr_1^r A_i \right) \le \prod_1^r m_d(A_i).
\end{equation}
For Lebesgue measurable $A_1, ..., A_r$ in $[0,1]^d$, there exists a Borel set $D$ with $\bigbkr_1^r A_i \subset D$ and $m_d(D) \le \prod \lambda_d(A_i)$.

\end{theorem}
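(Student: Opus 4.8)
The plan is to bootstrap from the two-event results already in hand, Theorem~\ref{thm [0,1]} and Corollary~\ref{cor [0,1]}, by induction on $r$, using the containment $\bigbkr_1^r A_i \subset \bigl( \cdots((A_1 \bkr A_2)\bkr A_3)\cdots \bkr A_r\bigr)$ proved in \cite{luck} (a purely set-theoretic fact, hence valid for infinite $S$). The one genuine obstacle is that the intermediate combinations $A_1 \bkr A_2$, $(A_1\bkr A_2)\bkr A_3$, and so on need not be Borel but only Lebesgue measurable (Lemma~\ref{lem cylinder}, Example~\ref{example hales}); since Theorem~\ref{thm [0,1]} requires \emph{Borel} inputs, it cannot be iterated naively. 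The remedy is to carry along, at each stage of the nesting, a \emph{Borel} overset of controlled measure, re-Borelizing after every application of the two-fold operator by means of Corollary~\ref{cor [0,1]}.

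Concretely, I would first record that $\bkr$ is monotone in each argument: from \eqref{def maximal cylinder}, $A\subset A'$ forces $[A]_K \subset [A']_K$, and hence, by \eqref{def bkr kesten}, $A\bkr B \subset A'\bkr B$ and likewise on the right. Writing $P_1 := A_1$ and $P_{j+1} := P_j \bkr A_{j+1}$, so that $P_r$ is the right-hand side of the containment above, I would then build Borel sets $D_j \supset P_j$ with $m_d(D_j)\le \prod_{i\le j} m_d(A_i)$ by induction. The base case is $D_1 = A_1$. For the step, monotonicity gives $P_{j+1} = P_j \bkr A_{j+1} \subset D_j \bkr A_{j+1}$, and since $D_j$ and $A_{j+1}$ are Borel, Corollary~\ref{cor [0,1]} furnishes a Borel $D_{j+1}\supset D_j\bkr A_{j+1}$ with $m_d(D_{j+1})\le m_d(D_j)\,m_d(A_{j+1})\le \prod_{i\le j+1} m_d(A_i)$. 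Setting $D := D_r$ gives a Borel set with $\bigbkr_1^r A_i \subset P_r \subset D$ and $m_d(D)\le \prod_1^r m_d(A_i)$; as $\bigbkr_1^r A_i$ is itself Lebesgue measurable by Lemma~\ref{lem cylinder}, this yields $\lambda_d(\bigbkr_1^r A_i)\le m_d(D)\le \prod_1^r m_d(A_i)$, which is \eqref{goal multi}.

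For the Lebesgue measurable case I would enlarge each $A_i$ to a Borel $A_i'\supset A_i$ with $m_d(A_i') = \lambda_d(A_i)$. Monotonicity of the $r$-fold operator in each argument (again from monotonicity of $[\cdot]_K$ in \eqref{def bkr many}) gives $\bigbkr_1^r A_i \subset \bigbkr_1^r A_i'$, and the Borel case just established produces a Borel $D\supset \bigbkr_1^r A_i'$ with $m_d(D)\le \prod_1^r m_d(A_i') = \prod_1^r \lambda_d(A_i)$; this $D$ is the desired cover. I expect the delicate point to be purely organizational --- keeping track that the arguments handed to Corollary~\ref{cor [0,1]} at each step are genuinely Borel (namely $D_j$ and $A_{j+1}$), rather than the non-Borel nested combinations themselves --- while no new measure-theoretic ingredient beyond Lemma~\ref{lem cylinder} and Corollary~\ref{cor [0,1]} is needed.
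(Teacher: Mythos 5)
Your proposal is correct and is essentially the paper's own argument: the paper likewise builds the nested binary products (its $B_i$ are your $P_j$), re-Borelizes after each two-fold step via Theorem~\ref{thm [0,1]} (its $C_i = D_{i-1} \bkr A_i$ and Borel oversets $D_i$ match your use of Corollary~\ref{cor [0,1]} to produce $D_{j+1} \supset D_j \bkr A_{j+1}$), and concludes from BKR monotonicity, the containment $\bigbkr_1^r A_i \subset B_r$ from \cite{luck}, and the Lebesgue measurability supplied by Lemma~\ref{lem cylinder}. Your treatment of the Lebesgue measurable case by enlarging to Borel oversets is also exactly the paper's reduction, so there is nothing to correct.
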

\begin{proof}
Define  sets $B_1,B_2,\ldots,B_r\subset [0,1]^d $, Lebesgue measurable sets $C_1,C_2,\ldots,C_r \subset [0,1]^d$, and Borel sets $D_1,D_2,\ldots,D_r$  
recursively, with
$$
   A_1 = B_1 = C_1 = D_1
$$
and for $i=2$ to $r$, using Lemma \ref{lem cylinder},
$$
     B_i = B_{i-1} \bkr A_i,
$$
$$ 
C_i = D_{i-1} \bkr A_i,
$$ 
$$ 
D_i \text{ is a Borel set with }
         C_i \subset D_i, \ \lambda_d(C_i) = m_d(D_i). 
$$

The BKR monotonicity relation that $B \subset D$ implies $B \bkr A \subset D \bkr A$, and induction, shows that for all $i$, $B_i \subset C_i \subset D_i$.
We check that $C_i$ is Lebesgue measurable by noting the it is the BKR combination of
two Borel sets, namely $D_{i-1}$ and $A_i$.

Theorem \ref{thm [0,1]} implies that $\lambda_d(C_i) \le m_d(D_{i-1}) \, m_d(A_i)$, and together with the defining property of $D_i$ this yields
$$
\lambda_d(C_i) \le \lambda_d(C_{i-1}) \, m_d(A_i)
$$
and it follows by induction that $\lambda_d(C_r) \le   \prod_1^r m_d(A_i)$.

It is shown in \cite{luck} that $ \bigbkr_1^r A_i  \subset B_r$.

Combined with $B_r \subset C_r$, we have $ \bigbkr_1^r A_i  \subset C_r$.  Lemma \ref{lem cylinder} shows that
$ \bigbkr_1^r A_i$ is Lebesgue measurable, so we have proved \eqref{goal multi}.

The case with Lebesgue measurable inputs $A_1,\ldots, A_r$ now follows from the Borel case, by the same reasing used to derive Corollary \ref{cor [0,1]} from Theorem \ref{thm [0,1]}.
 \end{proof}

\section{Extension of the BKR inequalities to $\mathbb{R}^d$}

Say we are given a product probability measure $\p$ on $\mathbb{R}^d$.   This is equivalent to saying that $\p$ is the law, with the Borel sigma-algebra on $\BR^d$, of $\bX = (X_1,\ldots,X_d)$, with $X_1,X_2,\ldots,X_d$ mutually independent, \emph{and} with \emph{some} given marginal distributions --- given by, say, the cumulative distribution functions $F_i$, where $F_i(t) := \p(X_i \le t)$ for $-\infty < t < \infty$.
Let $G_i$ be what is commonly called ``$F_i^{-1}$, the inverse cumulative distribution function for $X_i$'', 
or 
``the quantile function for the distribution of $X_i$''.   Specifically, we take the domain of $G_i$ to be (0,1),
and for $0 < u < 1$,
$$
   G_i(u) := \sup \{x \! : \ \p(X_i \le  x) \le u \},
 $$  
 this being a choice that makes $G_i(\cdot)$ right-continuous.   
It is standard to use this in a coupling:  with $U$ uniformly distributed in (0,1), $G_i(U)$ is equal in distribution to $X_i$. 

The net effect of this is to reassure the reader we have  no claim to originality, if we \emph{define}
\begin{equation}\label{def g}
   g: (0,1)^d \to \mathbb{R}^d, \ \ \bu = (u_1,\ldots,u_d) \mapsto \bx :=(G_1(u_1),\ldots,G_d(u_d)).
\end{equation}
Also, it is \emph{obvious} that under the uniform distribution on $(0,1)^d$, $g(\omega)$ is equal in distribution to $\bX$, i.e., for every Borel set $A$ in $\BR^d$, $m_d(g^{-1}(A))=\p(A)$.

\begin{theorem}\label{thm R}
For Borel subsets $A,B$ of  $\BR^d$, under any complete product probability measure $\p$ on
$\BR^d$,
\begin{equation}\label{real conclusion}
 \p(A \bkr B) \le \p(A) \, \p(B).
\end{equation}
For Borel subsets $A_1,\ldots,A_r$ of $\BR^d$, under any complete  product probability measure $\p$ on
$\BR^d$,
\begin{equation}\label{real conclusion many}
 \p(\bigbkr_1^r A_i ) \le \prod_1^r \p(A_i).
\end{equation}
\end{theorem}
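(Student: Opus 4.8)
The plan is to pull the whole problem back from $\BR^d$ to $[0,1]^d$ through the monotone coupling $g$ of \eqref{def g}, thereby reducing everything to the already-proved Theorem \ref{thm [0,1]}. First I would extend $g$ to all of $[0,1]^d$ by assigning each $G_i$ arbitrary finite values at the endpoints $0$ and $1$; since the boundary $[0,1]^d \setminus (0,1)^d$ is $m_d$-null, this does not disturb the pushforward identity $m_d(g^{-1}(E)) = \p(E)$ for Borel $E \subseteq \BR^d$, and it lets me work with the base space $[0,1]$ throughout, matching Theorem \ref{thm [0,1]}. The point that makes everything work is that $g$ acts one coordinate at a time, so it is compatible with the thin cylinders underlying the BKR operator.

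\textbf{The coordinate-wise inclusion.} The key step is to show that, for every $K \subseteq [d]$ and every $A \subseteq \BR^d$,
$$ g^{-1}([A]_K) \subseteq [g^{-1}(A)]_K. $$
Indeed, if $g(\omega) \in [A]_K$, meaning $\C(K, g(\omega)) \subseteq A$, then for any $\overline\omega$ agreeing with $\omega$ on $K$ the vector $g(\overline\omega)$ agrees with $g(\omega)$ on $K$, so $g(\overline\omega) \in \C(K, g(\omega)) \subseteq A$; hence $\C(K,\omega) \subseteq g^{-1}(A)$, i.e. $\omega \in [g^{-1}(A)]_K$. Distributing $g^{-1}$ over the union and intersections in \eqref{def bkr kesten}, and in \eqref{def bkr many}, I obtain the set inclusions
$$ g^{-1}(A \bkr B) \subseteq g^{-1}(A) \bkr g^{-1}(B), \qquad g^{-1}\!\left(\bigbkr_1^r A_i\right) \subseteq \bigbkr_1^r g^{-1}(A_i). $$
Because each $G_i$ is non-decreasing, hence Borel, the sets $g^{-1}(A)$, $g^{-1}(B)$, and the $g^{-1}(A_i)$ are Borel subsets of $[0,1]^d$.

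\textbf{The measure estimate.} By the $\BR^d$ analogue of Lemma \ref{lem cylinder} --- the projection $\Pr_K(A^c)$ is analytic, hence universally measurable, so each $[A]_K$, and therefore $A \bkr B$, lies in the domain of the complete measure $\p$ --- the left side $\p(A \bkr B)$ is defined. Completeness is used a second time to upgrade the pushforward identity: sandwiching a $\p$-measurable set $S$ between Borel sets of equal $\p$-measure and pulling back shows $g^{-1}(S)$ is Lebesgue measurable with $\lambda_d(g^{-1}(S)) = \p(S)$. Combining this with the inclusion above and with Theorem \ref{thm [0,1]} applied to the Borel sets $g^{-1}(A), g^{-1}(B)$ gives
$$ \p(A \bkr B) = \lambda_d\!\big(g^{-1}(A \bkr B)\big) \le \lambda_d\!\big(g^{-1}(A) \bkr g^{-1}(B)\big) \le m_d(g^{-1}(A))\,m_d(g^{-1}(B)) = \p(A)\,\p(B), $$
which is \eqref{real conclusion}. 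The many-event bound \eqref{real conclusion many} follows verbatim, using the $r$-fold inclusion together with Theorem \ref{thm [0,1] many}.

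\textbf{The main obstacle.} The hard part is not the coordinate-wise algebra, which is routine, but the measurability bookkeeping, and it is precisely here that the completeness hypothesis on $\p$ earns its keep. Without it neither is $\p(A \bkr B)$ guaranteed to be defined --- recall from Example \ref{example hales} that $A \bkr B$ need not even be Lebesgue measurable in general, and is typically only universally measurable rather than Borel --- nor would the pushforward identity extend from Borel sets to the possibly non-Borel set $g^{-1}(A \bkr B)$. The remaining subtlety, reconciling the coupling's natural base space $(0,1)$ with the $[0,1]$ of Theorem \ref{thm [0,1]}, is purely cosmetic and is dispatched by the null-boundary extension of $g$ noted at the outset.
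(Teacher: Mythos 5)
Your proposal is correct and takes essentially the same route as the paper: pull everything back to $[0,1]^d$ via the quantile map $g$ of \eqref{def g} and invoke Theorems \ref{thm [0,1]} and \ref{thm [0,1] many}. If anything you are more careful than the paper, which asserts the equality $g^{-1}(A) \bkr g^{-1}(B) = g^{-1}(A \bkr B)$ in \eqref{being careful} --- an equality that can fail when $g$ is not surjective, e.g.\ for marginals with atoms or gaps in support --- whereas your one-sided inclusion $g^{-1}(A \bkr B) \subseteq g^{-1}(A) \bkr g^{-1}(B)$ is exactly what the inequality requires, and your sandwich argument makes explicit the completeness bookkeeping that the paper leaves implicit.
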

\begin{proof}
The map $g$ defined by \eqref{def g} is Borel measurable.  Since the $i$th coordinate of $g(\bu)$
depends only on $u_i$, the BKR operators respect $g$, that is,
\begin{equation}\label{being careful}
    \text{for } a := g^{-1}(A) , b := g^{-1}(B) \subset [0,1]^d, \ \ \ a \bkr b = g^{-1}( A \bkr B).
\end{equation}
Of course, the BKR operator  $\bkr$ appearing in $a \bkr b$ in \eqref{being careful} is defined for $[0,1]^d$ by \eqref{def bkr kesten} and \eqref{starting to be careful}, while the BKR operator $\bkr$ appearing in $A \bkr B$ in \eqref{being careful} is defined for $\BR^d$ by the appropriate analog of 
\eqref{starting to be careful};  these \emph{are} different operators. 

Now apply Theorem \ref{thm [0,1]} to get \eqref{real conclusion}.  For the $r$-fold BKR operator, the same $g$, combined with Theorem \ref{thm [0,1] many}, implies \eqref{real conclusion many}.
\end{proof} 

\begin{cor}\label{cor subset R}
Suppose $S \subset \BR$ is Borel measurable.   For Borel subsets $A,B$  and $A_1,\ldots,A_r$ of $S^d$, under any complete product probability measure $\p$ on $S^d$, \eqref{real conclusion} and \eqref{real conclusion many} hold.
\end{cor}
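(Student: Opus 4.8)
The plan is to reduce the statement to Theorem~\ref{thm R} by regarding $S^d$ as a subset of $\BR^d$ and extending $\p$ to a product measure on all of $\BR^d$. Since $S$ is Borel in $\BR$, the marginals $\mu_1,\ldots,\mu_d$ of $\p$ are Borel probability measures carried by $S$; extending each by $\tilde\mu_i(E):=\mu_i(E\cap S)$ for Borel $E\subset\BR$ and forming the completed product $\tilde\p:=\prod_i\tilde\mu_i$, I obtain a complete product probability measure on $\BR^d$ with $\tilde\p(S^d)=1$. Because $S^d$ is Borel in $\BR^d$, every Borel subset of $S^d$ is a Borel subset of $\BR^d$, and the restriction of $\tilde\p$ to subsets of $S^d$ coincides with $\p$; in particular measurability and measure values transfer freely between the two spaces for sets living inside $S^d$.

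The one genuine issue is that the BKR operator is sensitive to the ambient space: for $K\neq[d]$ the thin cylinder $\C(K,\omega)$ computed in $S^d$ is strictly smaller than the one computed in $\BR^d$, so $A\bkr B$ evaluated in $S^d$ is \emph{not} the restriction to $S^d$ of $A\bkr B$ evaluated in $\BR^d$. The device that repairs this, which I expect to be the crux, is to pad each input set with the ``garbage'' outside $S^d$. For a Borel $A\subset S^d$ I set $\tilde A:=A\cup(\BR^d\setminus S^d)$, which is Borel in $\BR^d$ and satisfies $\tilde\p(\tilde A)=\p(A)$ since $\tilde\p(\BR^d\setminus S^d)=0$. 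The key computation is the cylinder identity: for $\omega\in S^d$ and $K\subset[d]$, the $\BR^d$-cylinder $\C(K,\omega)$ lies in $\tilde A$ if and only if its trace $\C(K,\omega)\cap S^d$, which is exactly the $S^d$-cylinder, lies in $A$ (points outside $S^d$ belong to $\tilde A$ automatically). Writing this with the notation of \eqref{def maximal cylinder} as $[\tilde A]_K\cap S^d=[A]_K$, with maximal cylinders taken in the respective spaces, and feeding it through \eqref{def bkr kesten} gives
\begin{equation*}
  (\tilde A\bkr\tilde B)\cap S^d = A\bkr B,
\end{equation*}
where the left-hand $\bkr$ is the $\BR^d$ operator and the right-hand one is the $S^d$ operator. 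The analogous identity $(\bigbkr_1^r\tilde A_i)\cap S^d=\bigbkr_1^r A_i$ follows the same way from \eqref{def bkr many}, distributing $\cap\,S^d$ across the union over disjoint $J_1,\ldots,J_r$ and across the finite intersection of the $[\tilde A_i]_{J_i}$.

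With the identity in hand the corollary is immediate. Applying Theorem~\ref{thm R} to the Borel sets $\tilde A,\tilde B$ (resp.\ $\tilde A_1,\ldots,\tilde A_r$) under $\tilde\p$ gives $\tilde\p(\tilde A\bkr\tilde B)\le\tilde\p(\tilde A)\,\tilde\p(\tilde B)$, and since $A\bkr B=(\tilde A\bkr\tilde B)\cap S^d\subset\tilde A\bkr\tilde B$ I get
\begin{equation*}
  \p(A\bkr B)=\tilde\p(A\bkr B)\le\tilde\p(\tilde A\bkr\tilde B)\le\tilde\p(\tilde A)\,\tilde\p(\tilde B)=\p(A)\,\p(B),
\end{equation*}
which is \eqref{real conclusion}; the same chain with $\bigbkr_1^r\tilde A_i$ and \eqref{real conclusion many} yields \eqref{real conclusion many} for $S^d$. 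The only points needing care beyond the cylinder identity are bookkeeping ones --- that $\tilde A$ is Borel, that $\tilde\p$ and $\p$ agree on subsets of $S^d$ so that $A\bkr B$ is $\p$-measurable (its $\tilde\p$-measurability being furnished by Theorem~\ref{thm R}), and that padding does not disturb the measures --- all of which are routine given $\tilde\p(\BR^d\setminus S^d)=0$.
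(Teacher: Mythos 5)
Your proof is correct and is essentially the paper's own argument: the paper's proof consists precisely of the padding device $\hat{A} := A \cup (\BR^d \setminus S^d)$ followed by an appeal to Theorem~\ref{thm R}. The cylinder identity $(\tilde A \bkr \tilde B) \cap S^d = A \bkr B$ that you verify as the crux is exactly the detail the paper leaves implicit here and records explicitly in Section~\ref{sect relax}, so there is nothing to add.
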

\begin{proof}
Extend $A \subset S^d$ to $\hat{A} \subset \BR^d$ given by $\hat{A} := A \cup (\BR^d \setminus S^d)$, likewise extend $B$ or $A_1,...,A_d$, and apply Theorem \ref{thm R}.
\end{proof}

\section{Infinite products}\label{sect infinite product}
How should the BKR operator be extended from $S^d$ to $S^\infty \equiv S^\BN$?
For $A \subset S^\BN$, and $K \subset \BN$, the definition of $[A]_K$ extends in the obvious way from \eqref{def maximal cylinder}:  
$[A]_K$
is the maximal cylinder subset of $A$, free in all coordinates indexed by $\BN \setminus K$. 

Definition \eqref{def bkr 2} for the BKR operator $\bkr$ on spaces of the form $S^d$, if modified to apply to $S^\BN$ merely by replacing $[d]$ by $\BN$, yields an operator we shall call $
\bkr_{=\infty}$:
\begin{equation}\label{def bkr = infty}
    A \bkr_{=\infty} B := \bigcup_{\text{ disjoint }  J,K \subset \BN} [A]_J \cap [B]_K.
\end{equation}
One problem with this operator is that it involves an uncountable union, so in the measurability
argument from Lemma \ref{lem cylinder}, 
the cylinders such as $[A]_J$ are Lebesgue measurable, but this fails to imply that for Borel set $A,B$, the result $A \bkr_{=\infty} B$ is Lebesgue measurable.  A more severe problem with
definition \eqref{def bkr = infty} is that it does not seem to yield to any approximation scheme down to a known version of the BKR inequality, as in the heart of this paper, Section 
\ref{sect approximate}.

Hence, for spaces of the form $S^\BN$, we adopt the following definitions:
\begin{equation}\label{def bkr infty}
   \text{for } A,B \subset S^\BN, \ \  A \bkr B := \bigcup_{\text{finite disjoint }  J,K \subset \BN} [A]_J \cap [B]_K
\end{equation}
and for $A_1,\ldots,A_r \subset S^\BN$, 
\begin{equation}\label{def bkr r infty} 
 \bigbkr_{1 \le i \le r} A_i  \equiv  A_1 \bkr \cdots \bkr A_r := 
\bigcup_{ \text{finite disjoint } J_1,\ldots,J_r \subset \BN} \  \bigcap_1^r [A_i]_{J_i}.
\end{equation}
It \emph{may} have been nice to use the customary BKR symbol $\bkr$ in the above definitions, rather than contrive new notation, perhaps $\bkr_{finite}$ or $\bkr_\infty$.  It \emph{is} valid,
and 
would
allow a single universal definition, to replace all of \eqref{def bkr 2}, \eqref{def bkr many},
\eqref{def bkr infty}, and \eqref{def bkr r infty}:  for countable index set $I$ (such as $I=[d]$ or $I=\BN$), for $r \ge 2$ and 
for $A_1,\ldots,A_r \subset S^I$, we define the event \emph{that $A_1,\ldots,A_r$ occur for \emph{finite} disjoint
sets of reasons},  
\begin{equation}
\bigbkr_1^r A_i := A_1 \bkr \cdots \bkr A_r := 
\bigcup_{ \text{finite disjoint } J_1,\ldots,J_r \subset I} \  \bigcap_1^r [A_i]_{J_i}.
\end{equation}
However, in light of the natural alternate extension given by \eqref{def bkr = infty}, users of the symbol $\bkr$ in the context of infinite products spaces \emph{should} attach warning prose, as we do in Theorems \ref{thm [0,1] N} and \ref{thm R N} below.
 
\begin{example}
Consider $(\Omega,\mathcal{F},\p)$ with $\Omega=[0,1]^\BN$,
$\mathcal{F}=$ the Borel sets, and $\p=m$, Lebesgue measure;  as usual let $X_i:=$ the $i$th coordinate, $S_n := X_1+\cdots+X_n$.  Let $A = \{ \limsup S_n/n \ge .2\}$.  Then
$A \bkr B=\emptyset$ for \emph{every} event $B$, but  \mbox{$\p(A \bkr_{=\infty} A)=1$}, which can be seen by taking $J=$ the odd positive integers, $K=$ the even positive integers.  Consider the $r$-fold $\bkr_{=\infty}$ operator defined in the natural way.  Take $A_1=\cdots=A_r =A, \ $ $B_r :=
A_1 \bkr_{=\infty} \, A_2 \bkr_{=\infty} \, \cdots \bkr_{=\infty} \, A_r$, and $C_r :=
A_1 \bkr A_2 \bkr \cdots \bkr A_r \equiv \bigbkr_1^r A_i$ . We have $B_r = \emptyset$ if and only if $r>5$, and  $\p(B_1)=\p(B_2)=1,\p(B_3)=\p(B_4)=\p(B_5)=0$.    We have $C_r =\emptyset$ for $r=1,2,3,\ldots$  with the case $r=1$ serving to  highlight a difference between the two forms of notation,
$\bigbkr_1^r A_i$ and $ A_1 \bkr \cdots \bkr A_r$ --- does the latter reduce to $A_1$ when $r=1$? Of course not.
\end{example}

The extension of theorems \ref{thm [0,1]}  and  \ref{thm [0,1] many} from $[0,1]^d$ to $[0,1]^\BN$ is relatively easy.
Following the notational scheme from Section \ref{sect m notation}, we write $m$ for Lebesgue measure on the Borel subsets of $[0,1]^\BN$, and $\lambda$ for the completion of $m$, so $\lambda$ is Lebesgue measure on the Lebesgue measurable subsets of $[0,1]^\BN$.

\begin{theorem}\label{thm [0,1] N}
Consider the BKR combination of events, that they occur for \emph{finite} disjoint sets of reasons,
as specified by \eqref{def bkr infty} and \eqref{def bkr r infty}.
 For Borel subsets $A,B$ in $[0,1]^\BN$,  $A \bkr B$ is Lebesgue measurable, and
$$
 \lambda(A \bkr B) \le m(A) \, m(B).
$$
For Borel subsets $A_1,\ldots,A_r$ in $[0,1]^\BN$, $A_1 \bkr \cdots \bkr A_r$ is Lebesgue measurable, and
\begin{equation}\label{goal multi N}
 \lambda \left( \bigbkr_1^r A_i \right) \le \prod_1^r m(A_i).
\end{equation}

\end{theorem}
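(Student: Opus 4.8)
The plan is to reduce both assertions to the finite-dimensional Theorems \ref{thm [0,1]} and \ref{thm [0,1] many}, taking advantage of the fact that the unions in \eqref{def bkr infty} and \eqref{def bkr r infty} range over \emph{finite} index sets and hence are \emph{countable} unions.

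First I would settle measurability. For a \emph{finite} $J \subset \BN$ and Borel $A \subset [0,1]^\BN$, the cylinder $[A]_J$ equals $\Ext([0,1]^J \setminus \Pr_J(A^c))$, and $\Pr_J(A^c)$ is the image of a Borel set under projection onto the finite-dimensional space $[0,1]^J$. As $[0,1]^\BN$ is a compact metric space, this projection is analytic, hence Lebesgue measurable, so the proof of Lemma \ref{lem cylinder} applies word for word to show $[A]_J$ is Lebesgue measurable. Since $A \bkr B$ and $\bigbkr_1^r A_i$ are \emph{countable} unions of finite intersections of such cylinders, both are Lebesgue measurable.

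For the inequality, write $\pi_d : [0,1]^\BN \to [0,1]^d$ for projection onto the first $d$ coordinates, and let $(A \bkr B)_d$ be the part of the union \eqref{def bkr infty} taken over disjoint $J, K \subset [d]$. Every finite subset of $\BN$ is contained in some $[d]$, so the sets $(A \bkr B)_d$ increase to $A \bkr B$, and by continuity of measure from below it suffices to prove $\lambda((A \bkr B)_d) \le m(A) m(B)$ for each fixed $d$. Since each factor $[A]_J$ with $J \subset [d]$ depends only on the first $d$ coordinates, one checks directly that $(A \bkr B)_d = \pi_d^{-1}(U)$ for an explicit base $U \subset [0,1]^d$, which is Lebesgue measurable because it is a finite union of bases of the measurable cylinders above; moreover $\lambda((A \bkr B)_d) = \lambda_d(U)$. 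The crux is then the inclusion $U \subset P \bkr Q$, where $\bkr$ now denotes the finite-dimensional operator on $[0,1]^d$ and $P, Q \subset [0,1]^d$ are the bases of $[A]_{[d]}, [B]_{[d]}$, that is $\pi_d^{-1}(P) = [A]_{[d]}$ and similarly for $Q$. Indeed, if $y \in U$ is witnessed by disjoint $J, K \subset [d]$ with $\{z \in [0,1]^\BN : z_i = y_i, \, i \in J\} \subset A$ and the analogous set for $K$ inside $B$, then for every $w \in [0,1]^d$ agreeing with $y$ on $J$ the full cylinder over $[d]$ through $w$ lies inside $\{z : z_i = y_i, \, i \in J\} \subset A$, so $w \in P$; hence the $d$-dimensional $J$-cylinder through $y$ lies in $P$, and symmetrically for $K$ and $Q$, giving $y \in P \bkr Q$. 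From $\pi_d^{-1}(P) = [A]_{[d]} \subset A$ we get $\lambda_d(P) \le m(A)$, and likewise $\lambda_d(Q) \le m(B)$.

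Finally I would invoke Corollary \ref{cor [0,1]} for the Lebesgue measurable sets $P, Q$ to obtain a Borel $C \subset [0,1]^d$ with $U \subset P \bkr Q \subset C$ and $m_d(C) \le \lambda_d(P) \lambda_d(Q)$, whence $\lambda_d(U) \le m_d(C) \le m(A) m(B)$; letting $d \to \infty$ proves the two-event inequality. The $r$-fold bound \eqref{goal multi N} is identical, with $P_1, \ldots, P_r$ the bases of $[A_i]_{[d]}$ and Theorem \ref{thm [0,1] many} replacing Corollary \ref{cor [0,1]}. The main obstacle is the inclusion $U \subset P \bkr Q$: the point is that constraining only the coordinates in $J$ (but across the entire infinite product) to keep one inside $A$ is a \emph{stronger} requirement than constraining all of $[d]$, so a disjoint-reasons witness for membership in $U$ transfers to one for $P \bkr Q$. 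A secondary subtlety is that $P, Q$ are generally only Lebesgue measurable, which is why one needs the completed (Corollary) form of the finite-dimensional inequality rather than the Borel form.
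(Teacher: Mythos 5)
Your proposal is correct and follows essentially the same route as the paper: truncating the union to disjoint $J,K \subset [d]$ (the paper's level-$d$ operator $\bkr_d$), using continuity from below, identifying the truncation as a cylinder over $[0,1]^d$ whose base is compared with the finite-dimensional BKR combination of the bases of $[A]_{[d]}$ and $[B]_{[d]}$, and invoking the completed form (Corollary \ref{cor [0,1]}, resp.\ the Lebesgue-measurable part of Theorem \ref{thm [0,1] many}) since those bases are only Lebesgue measurable. The only cosmetic difference is that you prove just the inclusion $U \subset P \bkr Q$ where the paper establishes the equality $A' \bkr B' = C'$; the inclusion is all the inequality needs, and your witness-transfer argument for it is sound.
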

\begin{proof}
The Lebesgue measurability of the BKR products is clear from the
sentence following \eqref{def bkr  = infty}. Define the level-$d$ BKR operator on $[0,1]^\BN$ by 
\begin{equation}\label{def bkr d}
    A \bkr_d B := \bigcup_{\text{ disjoint }  J,K \subset [d]} [A]_J \cap [B]_K.
\end{equation}
It is obvious that $A \bkr B$ is the countable, nested union of these, hence
$$   A \bkr B = \cup_{d \ge 0} A \bkr_d B \ \ \text{ and } 
\lim_{d \to \infty} \lambda(A \bkr_d B) \  = \lambda(A \bkr B).
$$
Therefore, 
 it suffices to show that for $d < \infty$, $\lambda(A \bkr_d B) \le m(A) \, m(B)$.

Fix $d$ and let $C = A \bkr_d B$.   Extend the notation $[[A]]_K$
for the base of the cylinder $[A]_K$,  from \eqref{def cylinder base} to the situation with $A \subset [0,1]^\BN$, and apply it with $K=[d]$.  
Take  $A' := [[A]]_{[d]} \subset [0,1]^d$,  so  $[A]_{[d]} \subset A$, and $\lambda_d(A') = \lambda( [A]_{[d]} ) \le m(A)$.
Similarly take  $B' := [[B]]_{[d]}$ and  $C' := [[C]]_{[d]}$.
Note that $C$ is a cylinder, free in the coordinates of index greater than  $d$, so $C = [C]_{[d]}$  and $\lambda(C) = \lambda_d( C')$.   It is ``obvious'' (and we supply details in the next paragraph) that with the usual BKR operator  on $[0,1]^d$,      $A' \bkr B' = C'$, so Corollary 
 \ref{cor [0,1]} applies, showing that $\lambda_d (C') \le \lambda_d(A') \, \lambda_d(B')$, and chaining together inequalities completes the proof that  $\lambda(A \bkr B) \le m(A) \, m(B)$.
 
Details for $A' \bkr B' = C'$:  
We start with $C := A \bkr_d B$ as defined by \eqref{def bkr d}, and apply $\Proj_{[d]}$.
The relation $([A]_J)_K = [A]_{J \cap K}$ in $[0,1]^\BN$, used with $K=[d]$, shows that  for $J \subset [d]$,
$$
  ([A]_{[d]})_J  = [A]_J ,  \text{ and hence, in } [0,1]^d, [A']_J = \Proj_{[d]} ([A]_J).
$$
The function $\Pr = \Proj_{[d]}$, which is the set-to-set function induced by $\pr_{[d]}: [0,1]^\BN \to [0,1]^d$, distributes over unions.   
For $J,K \subset [d]$, $[A']_J = \Pr( [A]_J)$ and $[B']_K = \Pr( [B]_K)$,   also, both $[A]_J$ and $
[B]_K$ are cylinders free in all coordinates of index greater than $d$, so that
$\Pr( [A]_J) \cap \Pr( [B]_K) = \Pr( [A]_J \cap [B]_K)$.
Hence, with all unions taken over disjoint $J,K \subset [d]$, 
\begin{eqnarray*}
A' \bkr B' &=&\bigcup \  [A']_J \cap [B']_K \\
   & = & \bigcup \ \Pr( [A]_J) \cap \Pr( [B]_K) \\
   & = & \Pr \left( \bigcup \ [A]_J \cap [B]_K \right) \\
   & = & \Pr \left( A \bkr_d B \right) = \Pr(C) = \Pr([C]_{[d]}) = C'.
\end{eqnarray*}
Finally, the result for the simultaneous $r$-fold BKR operator follows by a similar argument,
starting with an extension of \eqref{def bkr d} to define a level-$d$  $r$-fold BKR operator.
\end{proof}

\begin{theorem}\label{thm R N}
Consider the BKR combination of events, that they occur for \emph{finite} disjoint sets of reasons,
as specified by \eqref{def bkr infty} and \eqref{def bkr r infty}.
For Borel subsets $A,B$ of  $\BR^\BN$, under any  complete product probability measure $\p$ on
$\BR^\BN$,
\begin{equation}\label{real conclusion N}
 \p(A \bkr B) \le \p(A) \, \p(B).
\end{equation}
For Borel subsets $A_1,\ldots,A_r$ of $\BR^\BN$, under any  complete product probability measure $\p$ on
$\BR^\BN$,
\begin{equation}\label{real conclusion many N}
 \p(\bigbkr_1^r A_i ) \le \prod_1^r \p(A_i).
\end{equation}
\end{theorem}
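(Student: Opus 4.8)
Theorem \ref{thm R N} is the $\BR^\BN$ analog of Theorem \ref{thm R}, and the plan is to prove it by exactly the same coupling device, transported to the infinite-product setting and combined with the $[0,1]^\BN$ result already established in Theorem \ref{thm [0,1] N}.

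The plan is as follows. Given a complete product probability measure $\p$ on $\BR^\BN$, it is the law of $\bX = (X_1, X_2, \ldots)$ with the $X_i$ mutually independent and having cumulative distribution functions $F_i$. For each $i$ form the quantile function $G_i$ on $(0,1)$ as in the discussion preceding \eqref{def g}, and define the coupling map
$$
   g : (0,1)^\BN \to \BR^\BN, \ \ \bu = (u_1, u_2, \ldots) \mapsto (G_1(u_1), G_2(u_2), \ldots).
$$
First I would check that $g$ is Borel measurable (each coordinate $G_i(u_i)$ depends on the single coordinate $u_i$, so $g$ is a countable product of Borel-measurable coordinate maps) and that under the uniform measure $m$ on $(0,1)^\BN$ we have $m(g^{-1}(A)) = \p(A)$ for every Borel $A \subset \BR^\BN$, which follows from the standard coupling fact that $G_i(U)$ has law $X_i$ together with independence across coordinates and the monotone-class/Dynkin argument on product cylinders. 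The crucial structural point, just as in the proof of Theorem \ref{thm R}, is that the BKR operators (in the ``finite disjoint sets of reasons'' sense of \eqref{def bkr infty} and \eqref{def bkr r infty}) \emph{respect} $g$: because the $i$th coordinate of $g(\bu)$ is a function of $u_i$ alone, a thin cylinder $\C(J,\cdot)$ pulls back to a thin cylinder on the same index set $J$, so
$$
    g^{-1}(A \bkr B) = g^{-1}(A) \bkr g^{-1}(B),
$$
and likewise for the $r$-fold operator. Here the two $\bkr$ symbols refer to the operators on $\BR^\BN$ and on $(0,1)^\BN$ respectively; both use \emph{finite} disjoint index sets, which is precisely what makes the pullback identity work coordinatewise.

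With these two ingredients in hand the conclusion is immediate: writing $a := g^{-1}(A)$ and $b := g^{-1}(B)$, which are Borel subsets of $(0,1)^\BN$, Theorem \ref{thm [0,1] N} gives $\lambda(a \bkr b) \le m(a)\,m(b)$, and then
$$
  \p(A \bkr B) = m\bigl(g^{-1}(A \bkr B)\bigr) = m(a \bkr b) \le m(a)\,m(b) = \p(A)\,\p(B),
$$
where the second equality uses the pullback identity and the first uses measure-preservation. The $r$-fold statement \eqref{real conclusion many N} follows identically from the $r$-fold half of Theorem \ref{thm [0,1] N}. One small bookkeeping point: $g^{-1}(A \bkr B)$ lands in the Borel $\sigma$-algebra or at worst the Lebesgue completion on $(0,1)^\BN$, so I would invoke completeness of $\p$ (hypothesized in the statement) to ensure $A \bkr B$ is measurable and the outermost equality $\p(A \bkr B) = m(g^{-1}(A \bkr B))$ is legitimate even though $A \bkr B$ need not be Borel.

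The main obstacle is not any single deep step but rather verifying the coordinatewise pullback identity $g^{-1}(A \bkr B) = g^{-1}(A) \bkr g^{-1}(B)$ carefully in the infinite-index setting. The content is that for a cylinder in $\BR^\BN$ restricted on a finite index set $J$, its $g$-preimage is a cylinder in $(0,1)^\BN$ restricted on the \emph{same} $J$; this is exactly where the restriction to \emph{finite} disjoint sets of reasons is essential, since the whole construction is built from finite-$J$ cylinders and the argument would not survive a switch to the $\bkr_{=\infty}$ operator of \eqref{def bkr = infty}. Everything else is routine transport of the finite-$d$ argument of Theorem \ref{thm R} through the already-proven infinite-product inequality.
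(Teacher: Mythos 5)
Your proposal is correct and is essentially the paper's own proof: the paper disposes of Theorem \ref{thm R N} in one line, saying it follows from Theorem \ref{thm [0,1] N} by adapting the quantile-coupling map $g$ of \eqref{def g} and the argument of Theorem \ref{thm R} from $\BR^d$ to $\BR^\BN$, which is precisely what you carry out (including the pullback identity $g^{-1}(A \bkr B) = g^{-1}(A) \bkr g^{-1}(B)$, the analog of the paper's \eqref{being careful}, and the observation that finiteness of the index sets $J,K$ is what makes the coordinatewise argument go through). Your added bookkeeping on measurability and completeness of $\p$ only makes explicit what the paper leaves implicit.
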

\begin{proof}
The result follows immediately from Theorem \ref{thm [0,1] N}, by adapting \eqref{def g} and the argument used to prove Theorem \ref{thm R},
from the context of $\BR^d$, to the context of $\BR^\BN$.
\end{proof}

\section{Relaxing the sample space}
\label{sect relax}
In this paper we consider a sample space $S^I$ for $I$ countable
and $S=[0,1]$ --- with Lebesgue measure on $S^I$, or $S=\BR$, with arbitrary  complete product probability measure on $S^I$.
However, all results can be carried over to the superficially more general case 
$\Omega := \prod_{i \in I} S_i$ for $S_i$ a Polish subspace (equivalently, $G_\delta$ subset) of $\BR$, each $S_i$ is endowed with a probability measure $\p_i$ defined on the Borel subsets, and $\Omega$ has the product measure $\p = \prod \p_i$.

Extend $\p_i$, $\p$ to measures $\hat{\p}_i$, $\hat{\p}$ on $\BR$, $\BR^I$ respectively by taking them to be 0 on the complement.  The definition of the BKR operation from \eqref{def bkr many} or \eqref{def bkr r infty} rephrases in a natural way to $\Omega$.  One finds, for Borel sets $A_j \subset \Omega$: a) $\bkr_j A_j$ is $\p$-measurable by the argument of Lemma \ref{lem cylinder}, and b), writing $\hat{\bkr}$ for the BKR operation computed with respect to $\BR^I$ and $\hat{A}_j := A_j \cup (\BR^I \setminus \Omega)$, that 
\[
\bkr_j A_j = \left( \hat{\bkr}_j \hat{A}_j \right) \cap \Omega.
\]
Therefore, $\p(\bkr_j A_j) = \hat{\p}(\hat{\bkr}_j \hat{A}_j )$ and $\prod_j \hat{\p}(\hat{A}_j) = \prod_j \p(A_j)$, and it is clear that Theorems \ref{thm R} and \ref{thm R N} for $\BR^I$ 
imply the BKR inequality for
$\prod_{i \in I} S_i$.

\section{From $\Omega$ to $\Omega$}

It is tempting to attempt to extend our results 
to get something symmetric,  where we assume that the inputs $A,B$ are in
a larger family of sets than the Borel sets, and the output $A \bkr B$, satisfying $\lambda_d(A \bkr B) \le \lambda_d(A) \, \lambda_d(B)$, is in the same family.
Since defining the BKR product requires only complement, countable union, 
 and projection, the ``larger family" 
should be the class of projective sets, the smallest 
extension of the class of Borel sets closed under 
projection, complement and countable union, see \cite{YNM,AK}.  Then the version of Lemma \ref{lem cylinder}, \emph{If $A,B$ are projective, then the cylinders $[A]_K$ and the BKR product $A \bkr B$ are also projective}, is immediately true.

Probabilists may be familiar with the construction of the family of Borel sets, starting from the family of open sets,  take complements and countable unions, to get a larger family, then iterate -- see \cite[pages 30--32]{Billingsley3}.   The construction of projective sets is similar;  start with the Borel sets, 
take projections, countable unions, and complements, to get a larger family, then iterate.
But there is a difference:  the construction of Borel sets requires iteration out to the first uncountable ordinal, usually denoted $\Omega$, while the construction of projective sets is finished at the first infinite ordinal $\omega$.

In view of Corollary \ref{cor [0,1]}, to get BKR inequalities, we need 
only show that Lebesgue measure extends to projective sets.  
Here the situation is somewhat complex.  It is \emph{consistent}  
with ZFC to assume that such extension is false, in fact that 
there are nonmeasurable projective sets only one level in the 
projective hierarchy above analytic sets \cite{Godel}. On the other 
hand, the existence of an inaccessible cardinal would imply 
that all projective sets are measurable \cite{solovay}. Though such 
existence cannot be proved to be consistent with ZFC, it is widely 
assumed that this (consistency) is true --- and often such existence 
is accepted as a useful extra axiom.

\section{Open problems}

\begin{problem}  
For the BKR operator $\bkr_{=\infty}$ defined by \eqref{def bkr = infty}, prove 
or give a counterexample: 
\emph{For Borel subsets $A,B$ in $[0,1]^\BN$,  there exists a Borel set $C$, with  
$A \bkr B \subset C$ and $m(C) \le m(A) \, m(B)$}.
\end{problem}

It is not hard to determine, for the special case $d=2$, when the BKR inequality holds with equality:
for Borel sets $A,B \subset [0,1]^2$, \ $\lambda_2(A \bkr B) = m_2(A) m_2(B)$ if and only if
if and only if 0) $m_1(A) m_2(B)=0$, or
1) A or B is all of $[0,1]^2$, or
2) A and B are each unions of a ``cylinder" and a measure zero set,
with the two cylinders being ``orthogonal", i.e., in different 
directions.

\begin{problem}
Give a simple necesary and sufficient condition for  $A,B \subset [0,1]^d$,  to satisfy  $\lambda_d(A \bkr B) = m_d(A) m_d(B)$.
\end{problem}

As background for problems \ref{catalan problem} and \ref{strict catalan problem}: in \cite[Prop. 5.5]{luck}, for arbitrary $S$ and $A_1,\ldots,A_r \subset S^d$, we showed that
$\bigbkr_1^r A_i \subset ( \cdots ((A_1 \bkr A_2) \bkr A_3) \cdots \bkr A_{r-1}) \bkr A_r)$. For brevity we omit the symbol for binary BKR operator, and write simply $\bigbkr_1^r A_i \subset ( \cdots ((A_1  A_2)  A_3) \cdots  A_{r-1})  A_r)$
For a binary operator, 
the number of ways to associate a product with $r$ factors
is given by the Catalan number $C_{r-1}$, and the same argument shows that the simultaneous
$r$-fold BKR product, $\bigbkr_1^r A_i$, is a subset of each of the binary-associated products.

\begin{problem}\label{catalan problem}  Prove or disprove:  for $r=3,4,\ldots$, there exist $S$ and $d$, and 
$A_1,\ldots,A_r \subset S^d$,  such that the $C_{r-1}$ binary-associated products
for $A_1A_2\cdots A_r$ are all distinct.
\end{problem}

\begin{problem}\label{strict catalan problem}  For $r=3,4,\ldots$, for  any $S$ and $d$, and for any 
$A_1,\ldots,A_r \subset S^d$,  we already know that  $\bigbkr_1^r A_i$ is a subset of 
the \emph{intersection} of  the $C_{r-1}$ binary-associated products
for $A_1A_2\cdots A_r$.   Prove or disprove:  for $r=3,4,\ldots$, there exists an example where the containment of $\bigbkr_1^r A_i$ is \emph{strict}.
\end{problem}

Now consider cases where all $r$ factors are the same set $A$.  Commutativity of the  binary BKR product implies that $(A \bkr A) \bkr A = A \bkr (A \bkr A)$,  but does not resolve the situation for $r=4$ factors. Example \ref{4 example} example does resolve the situation for $r=4$.

\begin{example}[$\ ((AA)A)A \ne (AA)(AA)$ can occur]\label{4 example}
In $\{0,1\}^6$, let $A$ be the union of the following 2-cylinders, each of which is a set of size 16:

\[
\begin{tabular}{cccc} 
11****,& **11**,& 1**0**,& *11***,\\
**00**,&  ****00,& **1**0,& ***00*.
\end{tabular}
\]

Note that the first two 2-cylinders combine to show that 1111** $\subset AA$,
the next two show that 1110** $\subset AA$.  Hence the first four 2-cylinders show that
111*** $\subset AA$.  Similarly, the last four 2-cylinders show that ***000  $\subset AA$.
Combining, we see that 111000 $\in (AA)(AA)$.  Computer-exhaustive checking shows that 
$ ((AA)A)A =\emptyset$, hence $((AA)A)A \ne (AA)(AA)$.
\end{example}

In honor of Wedderburn \cite{wedderburn}, \cite[Sequence A001190]{OEIS}, write $W_n$ for the number of ways to binary-associate a product of the form $A^n$, up to equivalence modulo the commutative  property of the binary relation;  for example, $W_2,W_3,\ldots,W_7 = 1,1,1,2,3,6,11$.

\begin{problem} 

\begin{enumerate}

\item  For $r=5,6,\ldots,$ does there exist an example with a single set $A$, such that all $W_r$ equivalence classes of association yield different results?

\item  As above, with the additional restriction that $A \subset \{0,1\}^d$ for some $d$ depending on $r$.

\item  If, for a given $r$, there is an example with $A \subset \{0,1\}^d$  such that all $W_r$ equivalence classes of association yield different results,  write $D_r$ for the smallest such $d$, following the notation  Ramsey numbers.
Example \ref{4 example} shows that $D_4 \le 6$.   Can you prove that $D_4 > 5$?
Can you determine $D_5$?   Or give nontrivial upper or lower bounds for $D_r$ for general $r$?
 
\end{enumerate}
\end{problem}

\bibliographystyle{amsalpha}
\bibliography{lottery}

\end{document}